\tikzstyle{block} = [draw, fill=white!20, rectangle, 
\tikzstyle{sum} = [draw, fill=white!20, circle, node distance=1cm]
\tikzstyle{input} = [coordinate]
\tikzstyle{output} = [coordinate]
\tikzstyle{tmp} = [coordinate]
\tikzstyle{pinstyle} = [pin edge={<-,line width=0.3mm,black}]
\theoremstyle{plain}
\newtheorem{thm}{Theorem}
\newtheorem{lem}{Lemma}
\theoremstyle{definition}
\newtheorem{defn}{Definition}
\newtheorem{assum}{Assumption}
\begin{document}

\title{\LARGE \bf 
  Performance of Model Predictive Control of POMDPs}

\author{
  Martin A. Sehr \& Robert R. Bitmead
  \thanks{The authors are with the Department
    of Mechanical and Aerospace Engineering, UC San Diego, La
    Jolla, CA 92093, USA. 
\newline{\tt\small \{msehr,rbitmead\}@ucsd.edu}}%
}

\maketitle

\begin{abstract}
We revisit closed-loop performance guarantees for Model Predictive Control in the deterministic and stochastic cases, which extend to novel performance results applicable to receding horizon control of Partially Observable Markov Decision Processes. While performance guarantees similar to those achievable in deterministic Model Predictive Control can be obtained even in the stochastic case, the presumed stochastic optimal control law is intractable to obtain in practice. However, this intractability relaxes for a particular instance of stochastic systems, namely Partially Observable Markov Decision Processes, provided reasonable problem dimensions are taken. This motivates extending available performance guarantees to this particular class of systems, which may also be used to approximate general nonlinear dynamics via gridding of state, observation, and control spaces. We demonstrate applicability of the novel closed-loop performance results on a particular example in healthcare decision making, which relies explicitly on the duality of the control decisions associated with Stochastic Optimal Control in weighing appropriate appointment times, diagnostic tests, and medical intervention for treatment of a disease modeled by a Markov Chain.
\end{abstract}

\section{Introduction}
\label{intro}
Model Predictive Control (MPC) is well applied and popular because of its capacity to handle constraints and its simple formulation as an open-loop finite-horizon optimization problem evaluated on the receding horizon~\cite{mayne2000constrained,mayne2014model}. There are a few areas in which MPC is wanting for more complete results, notably in the area of output feedback control and the associated requirement to manage the duality of the control signal in stochastic MPC (SMPC) problems. When SMPC is developed as a logical extension of finite-horizon Stochastic Optimal Control, which demands computation of closed-loop policies, it inherits the computational intractability of this latter subject via the inclusion of the Bayesian filter, required to propagate the conditional state densities, and the stochastic dynamic programming equation. 

Results exist relating the infinite-horizon performance of MPC to both the optimal performance and the performance computed as part of the finite-horizon optimization. These performance bounds are available in both the deterministic \cite{grune2008infinite} and the stochastic \cite{sehr2016stochastic} settings, were one ever able to solve the underlying finite-horizon stochastic problem computationally. While approximation of SMPC based on Stochastic Optimal Control via more tractable surrogate problems is possible, such as for instance in~\cite{sui2008robust ,mayne2009robust,blackmore2010probabilistic,sehr2017particle}, one generally loses the associated closed-loop guarantees, in particular regarding infinite-horizon performance of the generated control laws. 

In this paper, we derive new performance results for SMPC of systems described by Partially Observable Markov Decision Processes (POMDPs, see e.g.~\cite{smallwood1973optimal,kaelbling1998planning}). POMDP system models of small to moderate dimensions admit tractable computation of finite-horizon stochastic optimal control laws while preserving the control signal duality, and so are attractive propositions with which to approach implementable SMPC~\cite{sunberg2013information,sehr2017tractable}. In deriving perfomance bounds for this specific class of problems, we examine their relation to the deterministic and stochastic continuous-state results, highlighting the role of value function monotonicity with horizon.
All theorems discussed in this paper exhibit the same conceptual structure:
\begin{center}
\minibox[frame,c]{\emph{Assumption}: \\ Terminal cost contraction}\\
$\Downarrow$\\
\minibox[frame,c]{\emph{Observation}: \\ Value function monotonicity with horizon}\\
$\Downarrow$\\
\minibox[frame,c]{\emph{Result}:\\
Infinite-horizon optimal cost\\
\begin{turn}{90} $\geq$ \end{turn}\\
Achieved infinite-horizon MPC cost\\
\begin{turn}{90} $\geq$ \end{turn}\\
Computed $N$-horizon optimal cost}
\end{center}

While the capability of handling constraints is a raison-d'\^etre for MPC, constraints complicate this analysis and add little to the discussion about closed-loop cost. Thus, as in most of~\cite{grune2008infinite}, we omit the explicit consideration in this paper and point out that constraints may be reinserted subject to recursive feasibility assumptions. 

The paper is organized as follows. We revisit a particular infinite-horizon performance result from~\cite{grune2008infinite} in Section~\ref{sec:det}. We then proceed by reviewing a stochastic counterpart to this result, derived in~\cite{sehr2016stochastic}, which we extend to receding horizon control of POMDPs in Section~\ref{sec:pomdp}. A specific POMDP example from healthcare is studied in Section~\ref{sec:eg} to demonstrate numerically the satisfaction of assumptions, interpret control duality, and evaluate performance bounds on the infinite control horizon. The example, introduced in~\cite{sehr2017tractable}, displays in particular the dual nature of SMPC based on Stochastic Optimal Control. 

\section{Deterministic Model Predictive Control}
\label{sec:det}
This section revisits a performance result for deterministic MPC from~\cite{grune2008infinite}, which we extend to SMPC for nonlinear systems (see also~\cite{sehr2016stochastic}) and POMDPs below. Consider the nonlinear dynamic system
\begin{align*}
x_{t+1} &= f(x_t,u_t),
\end{align*}
where $x_t\in X$ and $u_t\in U$ for $t\in\mathbb{N}_0\triangleq\{0,1,2,\ldots\}$ and metric spaces $X, U$. Further define the space of control sequences $u:\mathbb{N}_0\to U$ as $\mathcal{U}$. In principle, we aim to find control policy $\mu:X\to U$ that minimizes the infinite-horizon cost functional
\begin{align}\label{eq:detcostih}
J_{\infty}(x_0,u) \triangleq \sum_{k = 0}^{\infty} c(x_k,u_k),
\end{align}
where $c:X\times U\to\mathbb{R}_+$ is the stage cost. We define the optimal value function associated with cost~\eqref{eq:detcostih} as
\begin{align*}
J_{\infty}^{\star}(x_0)\triangleq\inf_{u} J_{\infty}(x_0,u)
\end{align*}
Given that solution of this infinite-horizon optimal control problem, even in the deterministic case, is usually intractable, a popular approach is to replace~\eqref{eq:detcostih} by a finite-horizon optimal control problem over horizon $N\in\mathbb{N}_0$, with cost functional
\begin{align}\label{eq:detcostfh}
J_{N}(x_0,u) \triangleq \sum_{k = 0}^{N-1} c(x_k,u_k) + c_N(x_N),
\end{align}
where $c_N:X\to\mathbb{R}_+$ denotes an optional terminal cost term. The optimal value function corresponding to~\eqref{eq:detcostfh} is defined by
\begin{align}\label{eq:detvaluefh}
J_{N}^{\star}(x_0)\triangleq\inf_{u} J_{N}(x_0,u).
\end{align}
We further denote the sequence of optimal control policies in this finite-horizon problem by $\mu^N$, with first control policy $\mu_0^N:X\to U$, which is implemented repeatedly in an MPC law, denoted by 
\begin{align*}
\mu_{\text{MPC}}^{N} \triangleq \{\mu_0^N,\mu_0^N,\ldots\}. 
\end{align*}
We now aim to provide computational estimates of the infinite-horizon achieved MPC cost $J_{\infty}(x_0,\mu_{\text{MPC}}^{N})$ in relation to the computed finite-horizon optimal cost $J_{N}(x,\mu^{N})$. This goal can be achieved, for instance, by using the following assumption.
\begin{assum}\label{assm:det}
For all $x\in X$, there exists $u\in U$ such that
\begin{align*}
c_N(f(x,u)) &\leq c_N(x) - c(x,u).
\end{align*}
\end{assum}
This assumption on the terminal cost $c_N$ in~\eqref{eq:detcostfh} then leads to the following performance guarantee.
\begin{thm}[Performance of deterministic MPC~\cite{grune2008infinite}]\label{thm:det}
\hspace{2cm}
Given Assumption~\ref{assm:det}, the inequality
\begin{align*}
J_{\infty}^{\star}(x) \leq J_{\infty}(x,\mu_{\text{MPC}}^{N}) \leq J_N^{\star}(x)
\end{align*}
holds for all $x\in X$.
\end{thm}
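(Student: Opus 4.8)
The plan is to split the claim into its two inequalities. The left inequality $J_\infty^\star(x) \leq J_\infty(x,\mu_{\text{MPC}}^{N})$ is immediate from the definition of $J_\infty^\star$ as the infimum of $J_\infty(x,\cdot)$ over all control sequences: the receding-horizon law $\mu_{\text{MPC}}^{N}$ merely generates one particular admissible sequence, so its cost cannot fall below the optimal value. All the substantive work lies in the right inequality, for which I would follow the relaxed dynamic programming route suggested by the paper's schematic (assumption $\Rightarrow$ monotonicity $\Rightarrow$ bound).

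First I would record the dynamic programming recursion with $J_0^\star = c_N$, namely $J_N^\star(x) = \inf_{u}[c(x,u) + J_{N-1}^\star(f(x,u))]$, so that the first MPC policy satisfies the identity $J_N^\star(x) = c(x,\mu_0^N(x)) + J_{N-1}^\star(f(x,\mu_0^N(x)))$. Rewriting Assumption~\ref{assm:det} as $c(x,u) + c_N(f(x,u)) \leq c_N(x)$ and taking the infimum over $u$ then gives $J_1^\star(x) \leq J_0^\star(x)$ for every $x \in X$.

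Next I would establish value-function monotonicity in the horizon, $J_{N+1}^\star(x) \leq J_N^\star(x)$ for all $N$ and all $x$, by induction: the base case is the inequality just obtained, and the inductive step follows because the recursion is monotone in its argument, so that $\inf_u[c(x,u)+J_N^\star(f(x,u))] \leq \inf_u[c(x,u)+J_{N-1}^\star(f(x,u))]$ whenever $J_N^\star \leq J_{N-1}^\star$ pointwise. Combining this with the DP identity for $\mu_0^N$ yields the dissipation inequality along the closed loop, $J_N^\star(x) \geq c(x,\mu_0^N(x)) + J_N^\star(f(x,\mu_0^N(x)))$, since $J_{N-1}^\star \geq J_N^\star$.

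Finally I would telescope. Writing $x_0 = x$ and $x_{t+1} = f(x_t,\mu_0^N(x_t))$ for the MPC trajectory, summing the dissipation inequality over $t = 0,\ldots,T-1$ collapses to $\sum_{t=0}^{T-1} c(x_t,\mu_0^N(x_t)) \leq J_N^\star(x) - J_N^\star(x_T) \leq J_N^\star(x)$, where the last step uses $J_N^\star \geq 0$, a consequence of $c,c_N \geq 0$. Letting $T \to \infty$ delivers $J_\infty(x,\mu_{\text{MPC}}^{N}) \leq J_N^\star(x)$. I expect the main obstacle to be technical rather than conceptual: the telescoping presupposes that the infimum defining $\mu_0^N$ is attained (otherwise an $\varepsilon$-suboptimal argument is needed), and the passage to the limit $T\to\infty$ relies on nonnegativity of the stage cost to guarantee that the partial sums are monotone and bounded. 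The monotonicity step is the genuine crux, as it is precisely where Assumption~\ref{assm:det} enters.
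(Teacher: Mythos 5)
Your proof is correct and takes essentially the same route as the paper, which only sketches the argument (deferring to Theorem~6.2 of~\cite{grune2008infinite}) by noting that Assumption~\ref{assm:det} implies $J_N^{\star}$ is monotonically non-increasing in the horizon $N$ --- precisely the monotonicity-plus-dissipation structure you establish. Your telescoping step also mirrors the paper's own detailed proof of the analogous POMDP result (Lemma~\ref{lem:pomdp}), so nothing in your argument departs from the paper's approach.
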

This result, which is a special case of Theorem~6.2 in~\cite{grune2008infinite}, allows us to provide bounds on the achieved infinite-horizon performance of the closed-loop system when choosing the terminal cost, $c_N$, as a Lyapunov function. This result is particularly useful because we compute the upper bound implicitly when generating our MPC control law, $\mu^{N}_{\text{MPC}}$. Theorem~\ref{thm:det} follows given that Assumption~\ref{assm:det} implies that the underlying finite-horizon optimal value function $J_{N}^{\star}(x)$ is monotonically non-increasing with increasing control horizon $N$. Notice, further, how this result not only provides infinite- but also finite-horizon closed-loop performance guarantees. This follows simply by
\begin{align*}
J_{\infty}^M(x,\mu_{\text{MPC}}^{N}) \leq J_{\infty}(x,\mu_{\text{MPC}}^{N}),
\end{align*}
for all $M\in\mathbb{N}_0$ and $x\in X$, where 
\begin{align*}
J_{\infty}^M(x_0,u) \triangleq \sum_{k=0}^M c(x_k,u_k) .
\end{align*} 
The stochastic extension of this observation is of interest in particular for applications such as the healthcare example provided in Section~\ref{sec:eg} below, where infinite-horizon performance may not be of particular interest given the inherent finite-horizon nature of the control problem. We next provide results of similar quality to Theorem~\ref{thm:det} for SMPC and in particular SMPC applied to POMDPs in the following sections.

\section{Stochastic Model Predictive Control}
\label{sec:smpc}
We next discuss closed-loop performance of SMPC as in~\cite{sehr2016stochastic}. Committing a slight abuse of notation, we shall recycle most of the symbols used previously in Section~\ref{sec:det} above. Consider nonlinear stochastic systems of the form
\begin{align}
x_{t+1}&=f(x_t,u_t,w_t), \label{eq:state}\\
y_t&=h(x_t,v_t), \label{eq:output}
\end{align}
where $x_t\in X$, $u_t\in U$, $y_t\in Y$ for $t\in\mathbb{N}_0$ and metric spaces $X, U, Y$, respectively. Starting from known initial state density $\pi_{0|-1} = \operatorname{pdf}(x_0)$, we denote the data available at time $t$ by 
\begin{align*}
\mathbf{\zeta}^t&\triangleq\{y_0,u_0,y_1,u_1,\dots,u_{t-1},y_t\},&
\mathbf{\zeta}^0&\triangleq\{y_0\}.
\end{align*}
We further impose the following standing assumption on the random variables and control inputs.
\begin{assum}\label{assm:sys}
The signals in~(\ref{eq:state}-\ref{eq:output}) satisfy:
\begin{enumerate}[label=\arabic*.]
\item $w_t$ and $v_t$ are i.i.d. sequences with known densities.
\item $x_0, w_t, v_l$ are mutually independent for all $t,l\in\mathbb{N}_0$.
\item The control input $u_t$ at time instant $t\in\mathbb{N}_0$ is a function of the data $\mathbf{\zeta}^t$ and given initial state density $\pi_{0\mid -1}$.
\end{enumerate}
\end{assum}
The \textit{information state,} denoted $\pi_t$, is the conditional probability density function of state $x_t$ given data $\mathbf{\zeta}^t$,
\begin{align*}
\pi_{t}&\triangleq\operatorname{pdf}\left(x_{t}\mid \mathbf{\zeta}^t \right).
\end{align*}
As a result of the Markovian dynamics~(\ref{eq:state}-\ref{eq:output}), optimal control inputs must inherently be \textit{separated} feedback policies (e.g.~\cite{bertsekas1995dynamic,BKKUM1986}). That is, optimal control input $u_{t}$ depends on the data $\mathbf{\zeta}^t$ and initial density $\pi_{0\mid -1}$ solely through the current information state, $\pi_{t}$. Optimality thus requires propagating $\pi_{t}$ and policies $g_t$, where
\begin{align*}
u_t = g_t(\pi_{t}).
\end{align*}
\begin{defn}
$\mathbb{E}_t[\,\cdot\,]$ and $\mathbb{P}_t[\,\cdot\,]$ are expected value and probability with respect to state $x_t$ -- with conditional density $\pi_t$ -- and i.i.d. random variables $\{(w_k,v_{k+1}):k\geq t\}$.
\end{defn}
Notice that stochastic optimal control on the infinite horizon (see~\cite{bertsekas1995dynamic,bertsekas1978stochastic}) typically requires a discount factor $\alpha < 1$, casting the stochastic version of~\eqref{eq:detcostih} as
\begin{align}\label{eq:scostih}
J_\infty(\pi_{0},g)&\triangleq
\mathbb{E}_0\left[\sum_{k=0}^\infty{\alpha^k c(x_k,g_k(\pi_{k}))}\right],
\end{align}
with corresponding finite-horizon cost
\begin{multline}\label{eq:scostfh}
J_N(\pi_{0},g)\triangleq \\
\mathbb{E}_0\left[\sum_{k=0}^{N-1}{\alpha^k c(x_k,g_k(\pi_{k}))} + \alpha^{N}c_{N}(x_{N})\right].
\end{multline}
Defining the optimal value function $J_{N}^{\star}(\pi_0)$ as in~\eqref{eq:detvaluefh}, 
\begin{align*}
J_{N}^{\star}(\pi_0) \triangleq \inf_{g_k(\cdot)} J_N(\pi_0,g) ,
\end{align*}
finite-horizon stochastic optimal feedback policies may be computed, in principle, by solving the stochastic dynamic programming equation,
\begin{multline}\label{eq:DP1}
J_{N-k}^{\star}(\pi_{k}) \triangleq \\
\inf_{g_k(\cdot)}\ \mathbb{E}_k \left[ c(x_k,g_k(\pi_k)) +\alpha J_{N-k-1}^{\star}(\pi_{k+1})\right],
\end{multline}
for $k = 0,\ldots,N-1$. The equation is solved backwards in time, from its terminal value,
\begin{align}\label{eq:DP2}
J_{0}^{\star}(\pi_{N}) &\triangleq \mathbb{E}_N \left[ c_N(x_{N})\right].
\end{align}
Similarly to Section~\ref{sec:det}, we denote by: $J^{\star}_{\infty}(\pi)$ the infinite-horizon optimal value function; $\mu^N$ the sequence of optimal policies in~(\ref{eq:DP1}-\ref{eq:DP2}); $\mu^N_0$ the first element of this sequence; $\mu^N_{\text{MPC}} \triangleq \{\mu^N_0,\mu^N_0,\ldots \}$ the receding horizon implementation of this sequence. We next impose the following stochastic counterpart to Assumption~\ref{assm:det} to discuss the infinite horizon cost of the SMPC law $\mu^N_{\text{MPC}}$.
\begin{assum}\label{assm:smpc}
For $\alpha\in[0,1)$, there exist $\eta\in\mathbb{R}_+$ and a policy $\tilde{g}(\cdot)$ such that
\begin{multline*}
\mathbb{E}_{\pi}\left[\alpha\, c_N(f(x,\tilde{g}(\pi),w)) \right] \stackrel{}{\leq}\\
\mathbb{E}_{\pi}\left[c_N(x) - c(x,\tilde{g}(\pi))\right]
 + \frac{\eta}{\alpha^{N-1}},
\end{multline*}
for all densities $\pi$ of $x\in X$. The expectation $\mathbb{E}_{\pi}[\cdot]$ is with respect to state $x$ -- with conditional density $\pi$ -- and $w$.
\end{assum}
This assumption then leads to the following extension of Theorem~\ref{thm:det} to SMPC of system~(\ref{eq:state}-\ref{eq:output}).
\begin{thm}[Performance of stochastic MPC~\cite{sehr2016stochastic}]\label{thm:smpc}
\hspace{1cm}
Given Assumption~\ref{assm:smpc}, SMPC with $\alpha\in[0,1)$ yields 
\begin{align*}
J_{\infty}^{\star}(\pi) \leq 
J_{\infty}(\pi,\mu^N_{\text{MPC}}) \leq 
J_{N}^{\star}(\pi) + \frac{\alpha}{1 - \alpha}\eta,
\end{align*}
for all densities $\pi$ of $x\in X$.
\end{thm}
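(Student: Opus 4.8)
The plan is to follow the template the paper already advertises: the \emph{assumption} (terminal-cost contraction) yields an \emph{observation} (approximate value-function monotonicity in the horizon), which in turn yields the \emph{result}. The left inequality \(J_\infty^\star(\pi)\le J_\infty(\pi,\mu_{\text{MPC}}^N)\) is immediate, since \(\mu_{\text{MPC}}^N\) is an admissible separated policy (it applies the information-state feedback \(\mu_0^N\) at every step), so its achieved cost cannot beat the infimum \(J_\infty^\star(\pi)\). All the work is in the right inequality.

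First I would establish approximate monotonicity of the finite-horizon value function in the form \(J_N^\star(\pi)\le J_{N-1}^\star(\pi)+\eta\) for all \(\pi\). I would prove the sharper claim \(J_{k+1}^\star(\pi)-J_k^\star(\pi)\le \alpha^{k}\,\eta/\alpha^{N-1}\) by induction on \(k\). For the base case \(k=0\), I would expand \(J_1^\star\) from the dynamic-programming recursion~\eqref{eq:DP1}--\eqref{eq:DP2}, use the tower property to rewrite \(\mathbb{E}_\pi[\alpha J_0^\star(\pi^+)]\) (with \(\pi^+\) the one-step-updated information state) as \(\mathbb{E}_\pi[\alpha c_N(f(x,u,w))]\), and insert the test policy \(\tilde g\) from Assumption~\ref{assm:smpc} to obtain \(J_1^\star(\pi)\le J_0^\star(\pi)+\eta/\alpha^{N-1}\). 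For the inductive step, I would take the minimiser of the \(J_k^\star\) problem, use it as a feasible (sub-optimal) policy in the \(J_{k+1}^\star\) problem, and subtract the two recursions; the difference collapses to \(\alpha\,\mathbb{E}_\pi[J_k^\star(\pi^+)-J_{k-1}^\star(\pi^+)]\), to which the induction hypothesis applies and contributes the extra factor \(\alpha\). Evaluating at \(k=N-1\) reduces \(\alpha^{N-1}\,\eta/\alpha^{N-1}\) to exactly \(\eta\); the \(1/\alpha^{N-1}\) inflation in the assumption is engineered precisely so that this telescoping leaves \(\eta\) at level \(N\).

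Second I would run the closed-loop telescoping argument mirroring the deterministic proof of Theorem~\ref{thm:det}. Writing the \(N\)-horizon Bellman identity for the MPC action at information state \(\pi_k\), substituting the monotonicity bound \(J_{N-1}^\star(\pi_{k+1})\ge J_N^\star(\pi_{k+1})-\eta\), and rearranging gives the per-stage inequality \(\mathbb{E}_{\pi_k}[c(x_k,\mu_0^N(\pi_k))]\le J_N^\star(\pi_k)-\alpha\,\mathbb{E}_{\pi_k}[J_N^\star(\pi_{k+1})]+\alpha\eta\). I would then multiply by \(\alpha^k\), take the full expectation \(\mathbb{E}_0\), and recognise the telescoping sequence \(a_k\triangleq\alpha^k\,\mathbb{E}_0[J_N^\star(\pi_k)]\). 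Summing over \(k\), discarding the tail via \(a_{M+1}\ge 0\) (nonnegative stage cost), and evaluating the geometric series \(\eta\sum_{k\ge 1}\alpha^k=\tfrac{\alpha}{1-\alpha}\eta\) yields \(J_\infty(\pi,\mu_{\text{MPC}}^N)\le J_N^\star(\pi)+\tfrac{\alpha}{1-\alpha}\eta\).

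The main obstacle I anticipate is bookkeeping the nested, randomised information states rather than any single hard estimate. Because each \(\pi_k\) is itself random (it depends on the realised observation sequence), every telescoping step must pass through the tower property \(\mathbb{E}_0[\,\cdot\,]=\mathbb{E}_0[\mathbb{E}_{\pi_k}[\,\cdot\,]]\), and the discount powers must be threaded through so that the \(\alpha^{k+1}\eta\) residuals assemble into a convergent geometric series. Aligning the \(\alpha^k\) scaling inside the induction with the \(1/\alpha^{N-1}\) factor in Assumption~\ref{assm:smpc} is the delicate bit, as is confirming that \(\mu_0^N\) remains a legitimate separated feedback so that the left inequality is not vacuous.
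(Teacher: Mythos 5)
Your proposal is correct, and its skeleton --- approximate value-function monotonicity followed by a closed-loop telescoping argument --- is exactly the machinery this paper uses (the paper defers the proof of Theorem~\ref{thm:smpc} to~\cite{sehr2016stochastic}, but exhibits the argument in its POMDP analogue, Lemma~\ref{lem:pomdp} and Theorem~\ref{thm:pomdp}). Your second stage coincides with the proof of Lemma~\ref{lem:pomdp} with $\gamma=0$: the per-stage inequality $\mathbb{E}_{\pi_k}[c(x_k,\mu_0^N(\pi_k))]\le J_N^{\star}(\pi_k)-\alpha\,\mathbb{E}_{\pi_k}[J_N^{\star}(\pi_{k+1})]+\alpha\eta$, discounting by $\alpha^k$, telescoping, discarding the nonnegative tail, and summing the geometric series. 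Where you genuinely differ is in how monotonicity is extracted from Assumption~\ref{assm:smpc}. The paper's route (see the proof of Theorem~\ref{thm:pomdp}) is a one-shot policy concatenation: take the $(N-1)$-horizon optimal policy, append $\tilde{g}$ as one extra terminal stage to obtain a suboptimal candidate for the $N$-horizon problem; the cost difference then collapses to the terminal-stage terms, which the assumption --- discounted by $\alpha^{N-1}$ --- bounds by $\eta$. This yields only the expectation bound $\mathbb{E}_0[J_N^{\star}(\pi_1)-J_{N-1}^{\star}(\pi_1)]\le\eta$, which is all the telescoping step requires. Your route is an induction on the horizon: the assumption is invoked once at the base case $k=0$ (via the tower property, as you note), and each inductive step, using the $k$-horizon minimizer as a candidate in the $(k+1)$-horizon problem, contributes a factor $\alpha$, yielding the pointwise family $J_{k+1}^{\star}(\pi)-J_k^{\star}(\pi)\le\alpha^{k}\eta/\alpha^{N-1}$. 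Your version is stronger (pointwise in $\pi$, and at every intermediate horizon) and makes transparent why the $\alpha^{-(N-1)}$ inflation in Assumption~\ref{assm:smpc} is calibrated exactly so that $\eta$ survives at level $N$; the paper's version is shorter, needs a single comparison, and, because it asserts only a bound in expectation over the realized $\pi_1$, would remain usable in settings where pointwise monotonicity is unavailable. The one technical point to tidy in your write-up: the infima in~\eqref{eq:DP1} need not be attained, so ``the minimiser of the $J_k^{\star}$ problem'' should be replaced by an $\epsilon$-optimal policy with $\epsilon\to 0$; this does not affect the argument.
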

This result relates the following quantities in SMPC: \textit{design cost}, $J_N^{\star}(\pi)$, which is evaluated as part of the SMPC computation; \textit{optimal cost}, $J_{\infty}^{\star}(\pi)$, which is unknown (otherwise we would use the infinite-horizon optimal policy); and, unknown infinite-horizon SMPC \textit{achieved cost} $J_\infty(\pi,\mu^N_{\text{MPC}})$. The result, which must exhibit duality and satisfaction of the stochastic programming equation~(\ref{eq:DP1}-\ref{eq:DP2}), is special in that SMPC approaches relying on approximation of the finite horizon Stochastic Optimal Control problem, as commonly found in the literature, do not generally yield statements regarding performance of the implemented control laws on the infinite horizon. This fact is linked inherently to the loss of the dual optimal nature of the control inputs when avoiding solution of~(\ref{eq:DP1}-\ref{eq:DP2}). 

As in Section~\ref{sec:det} and Theorem~\ref{thm:det}, the proof of Theorem~\ref{thm:smpc} via Assumption~\ref{assm:smpc} relies on verifying monotonicity of the underlying optimal value function $J_N^{\star}(\pi)$. We next proceed by extending this result and its proof to dual optimal receding horizon control of POMDPs.

\section{Stochastic MPC for POMDPs}
\label{sec:pomdp}
POMDPs are characterized by probabilistic dynamics on a finite state space $X = \{1,\ldots,n\}$, finite action space $U = \{1,\ldots,m\}$, and finite observation space $Y = \{1,\ldots,o\}$. POMDP dynamics are defined by the conditional state transition and observation probabilities
\begin{align}\label{eq:pomdpx}
\mathbb{P}\left(x_{t+1} = j \mid x_t = i, u_t = a\right) &= p_{ij}^{a}, \\
\label{eq:pomdpy}
\mathbb{P}\left(y_{t+1} = \theta \mid x_{t+1} = j, u_t = a\right) &= r_{j\theta}^{a},
\end{align}
where $t\in\mathbb{N}_0$, $i,j\in X$, $a\in U$, $\theta\in Y$. The state transition dynamics~\eqref{eq:pomdpx} correspond to a conventional Markov Decision Process (MDP, e.g.~\cite{puterman2014markov}). However, the control actions $u_t$ are to chosen based on the known initial state distribution $\pi_0 = \operatorname{pdf}(x_0)$ and the sequences of observations, $\{y_1,\ldots,y_t\}$, and controls $\{u_0,\ldots,u_{t-1}\}$, respectively. That is, we are choosing our control actions in a Hidden Markov Model (HMM, e.g.~\cite{elliott2008hidden}) setup. Notice that, while POMDPs conventionally do not have an initial observation $y_0$ in~\eqref{eq:pomdpy}, as is commonly assumed in nonlinear system models of the form~(\ref{eq:state}-\ref{eq:output}), one can easily modify this basic setup without altering the discussion below.

Given control action $u_t = a$ and measured output $y_{t+1} = \theta$, the information state $\pi_t$ in a POMDP is updated via
\begin{align*}
\pi_{t+1,j} = \frac{\sum_{i\in X} 
\pi_{t,j} p_{ij}^a r_{j\theta}^a}{\sum_{i,j\in X} \pi_{t,j} p_{ij}^a r_{j\theta}^a} ,
\end{align*}
where $\pi_{t,j}$ denotes the $j^{\text{th}}$ entry of the row vector $\pi_t$. To specify the cost functionals~\eqref{eq:scostih} and~\eqref{eq:scostfh} in the POMDP setup, we write the stage cost as $c(x_t,u_t) = c_{i}^{a}$ if $x_t = i\in X$ and $u_t = a\in U$, summarized in the column vectors $c(a)$ of the same dimension as row vectors $\pi_k$. Similarly, the terminal cost terms are $c_N(x_t) = c_{i,N}$ if $x_N = i\in X$, summarized in the column vector $c_N$. The infinite horizon cost functional defined in Section~\ref{sec:smpc} then follows as 
\begin{align*}
J_\infty(\pi_{0},g) & =
\mathbb{E}_0 \left[ \sum_{k=0}^\infty{\alpha^k \pi_k c(g_k(\pi_k)) } \right],
\end{align*}
with corresponding finite-horizon variant
\begin{align*}
J_N(\pi_{0},g) &= 
\mathbb{E}_0 \left[ \sum_{k=0}^{N-1}{\alpha^k \pi_k c(g_k(\pi_k))} + 
\alpha^{N}\pi_N c_N \right].
\end{align*}
Extending~(\ref{eq:DP1}-\ref{eq:DP2}), optimal control decisions may then be computed via
\begin{multline}\label{eq:DP1pomdp}
J_{N-k}^{\star}(\pi_k) = \min_{g_k(\cdot)} \Bigg\{ \pi_{k} c(g_k(\pi_k)) \\ +
\alpha\sum_{\theta \in {Y}} \mathbb{P}\left(y_{k+1} = \theta \mid \pi_k,\,g_k(\pi_k) \right) 
J_{N-k-1}^{\star}(\pi_{k+1}) \Bigg\}, 
\end{multline}
for $k = 0,\ldots,N-1$, from terminal value function
\begin{align}\label{eq:DP2pomdp}
J_{0}^{\star}(\pi_N) = \pi_N c_N .
\end{align}
Using the notation for optimal finite- and infinite-horizon value functions as well as MPC policies introduced in Section~\ref{sec:smpc}, we next prove the following auxiliary result before extending the performance guarantees in Theorem~\ref{thm:smpc} to SMPC on POMDPs. 
\begin{lem}\label{lem:pomdp}
If there exist $\gamma\in[0,1]$ and $\eta\in\mathbb{R}_+$ such that
\begin{multline}\label{eq:lem}
\mathbb{E}_0 \left[ J_{N}^{\star}(\pi_1) - J_{N-1}^{\star}(\pi_1) \right] \leq 
\gamma \mathbb{E}_0 \left[ \pi_0 c(\mu_0^N(\pi_0)) \right] + \eta,
\end{multline}
for all densities $\pi_0$ of $x_0\in X$, then SMPC with discount factor $\alpha\in[0,1)$ yields
\begin{multline}\label{eq:bounds}
(1- \alpha\gamma)\, J^\star_{\infty}(\pi_0) \leq 
(1- \alpha\gamma)\, J_{\infty}(\pi_0,\mu^N_{\text{MPC}}) \\ \leq 
J^\star_{N}(\pi_0) + \frac{\alpha}{1 - \alpha}\eta.
\end{multline}
\end{lem}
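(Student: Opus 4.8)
The plan is to treat the two inequalities in~\eqref{eq:bounds} separately, with the left one being immediate and the right one following from a relaxed dynamic programming argument in the spirit of the proofs of Theorems~\ref{thm:det} and~\ref{thm:smpc}. For the lower bound, observe that since $\alpha\in[0,1)$ and $\gamma\in[0,1]$ we have $\alpha\gamma\leq\alpha<1$, so the constant $1-\alpha\gamma>0$ is strictly positive and may be multiplied through without reversing inequalities. It therefore suffices to establish $J^\star_\infty(\pi_0)\leq J_\infty(\pi_0,\mu^N_{\text{MPC}})$, which holds because $\mu^N_{\text{MPC}}$ is an admissible stationary separated policy while $J^\star_\infty(\pi_0)$ is, by definition, the infimum of $J_\infty(\pi_0,\cdot)$ over all such policies.

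For the upper bound, I would first convert the hypothesis~\eqref{eq:lem} into a per-step relaxed Bellman inequality for the fixed-horizon value function $J^\star_N$ evaluated along the closed-loop MPC trajectory $\pi_0,\pi_1,\pi_2,\dots$. Writing $\ell_k \triangleq \pi_k c(\mu^N_0(\pi_k))$ for the MPC stage cost, stationarity of the recursion~(\ref{eq:DP1pomdp}-\ref{eq:DP2pomdp}) — the value function and first policy depend only on the stages-to-go and the current information state — gives, conditioned on $\pi_k$,
\begin{align*}
J^\star_N(\pi_k) = \ell_k + \alpha\, \mathbb{E}\!\left[J^\star_{N-1}(\pi_{k+1})\mid \pi_k\right].
\end{align*}
Applying~\eqref{eq:lem} with $\pi_0$ replaced by the random density $\pi_k$ to lower-bound $J^\star_{N-1}(\pi_{k+1})$ by $J^\star_N(\pi_{k+1})$ and substituting yields the monotonicity-type observation
\begin{align*}
J^\star_N(\pi_k) \geq (1-\alpha\gamma)\,\ell_k + \alpha\, \mathbb{E}\!\left[J^\star_N(\pi_{k+1})\mid \pi_k\right] - \alpha\eta ,
\end{align*}
i.e.\ the relaxed hypothesis~\eqref{eq:lem} purchases a one-step decrease of $J^\star_N$ along the trajectory, up to the slack terms $\alpha\gamma\,\ell_k$ and $\alpha\eta$. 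This is precisely the middle box of the conceptual structure in the introduction.

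The final step is to telescope. Taking the outer expectation $\mathbb{E}_0[\cdot]$ (tower property), multiplying by $\alpha^k$, and summing over $k\in\mathbb{N}_0$, the sum $\sum_k \alpha^k\bigl(\mathbb{E}_0[J^\star_N(\pi_k)] - \alpha\,\mathbb{E}_0[J^\star_N(\pi_{k+1})]\bigr)$ collapses to $J^\star_N(\pi_0)$, the left-hand side accumulates to $(1-\alpha\gamma)\sum_k\alpha^k\mathbb{E}_0[\ell_k] = (1-\alpha\gamma)\,J_\infty(\pi_0,\mu^N_{\text{MPC}})$, and the slack contributes $\alpha\eta\sum_k\alpha^k = \tfrac{\alpha}{1-\alpha}\eta$, giving exactly the right inequality in~\eqref{eq:bounds}. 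I expect the main obstacle to be the convergence bookkeeping for this telescoping: one must justify that the boundary term $\alpha^k\,\mathbb{E}_0[J^\star_N(\pi_k)]\to 0$ so that the collapse is clean, and that the interchange of summation and expectation is legitimate. This is where finiteness of the POMDP is essential — on the probability simplex over $X=\{1,\dots,n\}$ the piecewise-linear value function $J^\star_N$ is bounded, and with $\alpha<1$ the discounted tail vanishes.
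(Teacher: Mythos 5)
Your proof is correct and takes essentially the same route as the paper's: your per-step relaxed Bellman inequality (obtained from optimality of $\mu_0^N$ plus hypothesis~\eqref{eq:lem}) is exactly the paper's intermediate inequality~\eqref{eq:Vineq}, and both arguments then telescope it along the closed-loop trajectory with the discount factor and obtain the left inequality from infinite-horizon optimality. The only immaterial difference is the convergence bookkeeping you flag: the paper telescopes the finite partial sums $J_{\infty}^{M}(\pi_0,\mu^N_{\text{MPC}})$ and simply drops the term $-\alpha^M \mathbb{E}_0\left[ J_N^{\star}(\pi_M) \right] \leq 0$ (non-negativity of the costs) before letting $M\to\infty$, whereas you sum the infinite series directly and justify the vanishing boundary term via boundedness of $J_N^{\star}$ on the simplex --- both are valid here.
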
 
\begin{proof}
Optimality of the initial policy $\mu_0^{N}(\cdot)$ implies 
\begin{align*}
J_{N}^{\star}(\pi_0) &= \mathbb{E}_0 \left[ \pi_0 c(\mu_0^{N}) + 
\alpha J_{N-1}^{\star}(\pi_1) \right] \\ &+
\alpha \mathbb{E}_0 \left[  J_{N}^{\star}(\pi_1) - J_{N}^{\star}(\pi_1) \right],
\end{align*}
which by~\eqref{eq:lem} yields
\begin{multline}\label{eq:Vineq}
(1 - \alpha \gamma)\mathbb{E}_0
\left[ \pi_0 c(\mu_0^N(\pi_0)) \right] \leq\\ 
J_N^{\star}(\pi_0) -
\alpha  \mathbb{E}_0 \left[ J_{N}^{\star}(\pi_1) \right] + \alpha \eta.
\end{multline}
Now denote by $J_{\infty}^{M}(\pi_0,\mu^N_{\text{MPC}})$ the first $M \in \mathbb{N}_1$ terms of the achieved infinite-horizon cost $J_{\infty}(\pi_0,\mu^N_{\text{MPC}})$ subject to the SMPC implementation of policy $\mu_0^{N}(\cdot)$. By~\eqref{eq:Vineq}, we have
\begin{multline*}
(1 - \alpha \gamma) J_{\infty}^{M}(\pi_0,\mu^N_{\text{MPC}}) = \\
(1 - \alpha \gamma) \mathbb{E}_0 \left[ \sum_{k=0}^{M-1} \alpha^k \pi_k c(\mu_0^N(\pi_k)) \right] \leq \\
\mathbb{E}_0 \left[ J_N^{\star}(\pi_0) - \alpha J_N^{\star}(\pi_1) + \alpha\eta + \alpha J_N^{\star}(\pi_1) - \alpha^2 J_N^{\star}(\pi_2)+ \right. \\
\left. \alpha^2\eta +\ldots + 
\alpha^{M-1}J_N^{\star}(\pi_{M-1}) - \alpha^M J_N^{\star}(\pi_M) + \alpha^{M} \eta \right],
\end{multline*}
such that
\begin{multline*}
(1 - \alpha \gamma)  J_{\infty}^{M}(\pi_0,\mu^N_{\text{MPC}}) \leq 
J_N^{\star}(\pi_0)  \\
-\alpha^M \mathbb{E}_0 \left[ J_N^{\star}(\pi_M) \right] + 
\left( \alpha + \ldots +\alpha^{M}\right) \eta,
\end{multline*}
which confirms the right-hand inequality in~\eqref{eq:bounds} in the limit as $M\to\infty$. The left-hand inequality follows directly from optimality on the infinite horizon.
\end{proof}
This lemma then leads to the following assumption and subsequent performance result in the spirit of Theorems~\ref{thm:det}-\ref{thm:smpc}.
\begin{assum}\label{assm:pomdp}
For $\alpha\in[0,1)$, there exist $\eta\in\mathbb{R}_+$ and a policy $\tilde{g}(\cdot)$ such that
\begin{align}\label{eq:pomdpassm}
\mathbb{E}_{0}\left[\alpha\, \pi_1 c_N \right] \stackrel{}{\leq}
\mathbb{E}_{0}\left[ \pi_0 c_N - \pi_0 c(\tilde{g}(\pi_0))  \right]
 + \frac{\eta}{\alpha^{N-1}},
\end{align}
for all densities $\pi_0$ of $x_0\in X$.
\end{assum}
\begin{thm}\label{thm:pomdp}
[Performance of SMPC for POMDPs]\label{thm:pomdp}
\hspace{2cm}
Given Assumption~\ref{assm:pomdp}, SMPC for POMDPs with $\alpha\in[0,1)$ yields 
\begin{align*}
J_{\infty}^{\star}(\pi) \leq 
J_{\infty}(\pi,\mu^N_{\text{MPC}}) \leq 
J_{N}^{\star}(\pi) + \frac{\alpha}{1 - \alpha}\eta,
\end{align*}
for all densities $\pi$ of $x\in X$.
\end{thm}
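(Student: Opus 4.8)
The plan is to deduce Theorem~\ref{thm:pomdp} from Lemma~\ref{lem:pomdp} by verifying the latter's hypothesis~\eqref{eq:lem} in the special case $\gamma = 0$. Setting $\gamma = 0$ makes the prefactor $1 - \alpha\gamma = 1$, so the bounds~\eqref{eq:bounds} collapse to exactly the inequalities asserted in Theorem~\ref{thm:pomdp}. It therefore suffices to show that Assumption~\ref{assm:pomdp} implies
\begin{align*}
\mathbb{E}_0\left[ J_N^\star(\pi_1) - J_{N-1}^\star(\pi_1) \right] \leq \eta
\end{align*}
for all initial densities $\pi_0$. I would obtain this from the stronger pointwise monotonicity-with-slack estimate $J_N^\star(\pi) \leq J_{N-1}^\star(\pi) + \eta$, valid for every information state $\pi$, by evaluating at $\pi = \pi_1$ and applying $\mathbb{E}_0[\,\cdot\,]$.

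To establish the pointwise estimate I would mimic the monotonicity argument underlying Theorem~\ref{thm:det}, now in the POMDP setting. Starting from an information state $\pi$, take the optimal $(N-1)$-horizon policy, which generates information states $\pi = \pi_0,\pi_1,\ldots,\pi_{N-1}$ and cost $J_{N-1}^\star(\pi)$ with terminal contribution $\alpha^{N-1}\pi_{N-1} c_N$. I would then construct a feasible, generally suboptimal, $N$-horizon policy that reuses this policy for the first $N-1$ stages and appends the policy $\tilde{g}(\cdot)$ of Assumption~\ref{assm:pomdp} at stage $N-1$. Since the information-state recursion depends only on the applied controls and observations and not on the horizon, the states $\pi_0,\ldots,\pi_{N-1}$ and the first $N-1$ stage costs are unchanged, so by the dynamic programming definition of $J_N^\star$ in~\eqref{eq:DP1pomdp} and~\eqref{eq:DP2pomdp} the difference $J_N^\star(\pi) - J_{N-1}^\star(\pi)$ is bounded above by the extra cost
\begin{align*}
\mathbb{E}_0\left[ \alpha^{N-1}\pi_{N-1} c(\tilde{g}(\pi_{N-1})) + \alpha^{N}\pi_N c_N - \alpha^{N-1}\pi_{N-1} c_N \right].
\end{align*}

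The decisive step is to bound this extra cost by $\eta$ using Assumption~\ref{assm:pomdp}. I would apply inequality~\eqref{eq:pomdpassm} conditionally at the random terminal information state $\pi_{N-1}$ --- legitimate because the assumption is posited for all densities --- identifying $\pi_{N-1}$ with the generic $\pi_0$ and $\pi_N$ with $\pi_1$ therein. Multiplying the resulting one-step bound by $\alpha^{N-1}$ and taking the total expectation via the tower property, the slack $\eta/\alpha^{N-1}$ scales to exactly $\eta$ and the $\pi_{N-1} c(\tilde{g}(\pi_{N-1}))$ and $\pi_{N-1} c_N$ terms cancel against their counterparts, yielding extra cost $\leq \eta$ and hence $J_N^\star(\pi) \leq J_{N-1}^\star(\pi) + \eta$. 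The main obstacle is precisely this stochastic bookkeeping: keeping the discount weights, the conditioning on $\pi_{N-1}$, and the horizon-dependent slack aligned so that they telescope correctly --- the factor $\alpha^{N-1}$ in Assumption~\ref{assm:pomdp} is calibrated exactly so that the $N$-horizon increment lands at $\eta$. An equivalent route that makes this calibration transparent is an induction on the number of remaining stages directly on~\eqref{eq:DP1pomdp}: the base case gives $J_1^\star - J_0^\star \leq \eta/\alpha^{N-1}$, each dynamic programming step contracts the increment by $\alpha$, and after $N-1$ steps the bound $J_N^\star - J_{N-1}^\star \leq \eta$ emerges. With the hypothesis of Lemma~\ref{lem:pomdp} thus verified at $\gamma = 0$, the theorem follows immediately.
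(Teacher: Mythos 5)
Your proposal is correct and takes essentially the same route as the paper: it verifies the hypothesis of Lemma~\ref{lem:pomdp} with $\gamma = 0$ by appending $\tilde{g}(\cdot)$ to the optimal $(N-1)$-horizon policy to form a suboptimal $N$-horizon policy, then applies Assumption~\ref{assm:pomdp} at the terminal information state so that the slack $\eta/\alpha^{N-1}$ scales to exactly $\eta$. The paper's proof does precisely this (working directly at $\pi_1$ rather than at a generic $\pi$ and then specializing), so aside from the optional induction remark your argument and the paper's coincide.
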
 
\begin{proof}
Use optimality and Assumption~\ref{assm:pomdp} to conclude
\begin{align*}
J_{N}^{\star}(&\pi_1) - J_{N-1}^{\star}(\pi_1) \\
=\ &\mathbb{E}_{0} \Bigg[ \left(\sum_{k=0}^{N-1}  \alpha^k 
\pi_{k+1}c(\mu_k^{N}(\pi_{k+1})) + \alpha^N \pi_{N+1}c_N \right)\\
& -\left(\sum_{k=0}^{N-2}  \alpha^k 
\pi_{k+1}c(\mu_{k+1}^{N}(\pi_{k+1})) + \alpha^{N-1} \pi_{N}c_N \right) \Bigg] \\ \leq \ & 
\mathbb{E}_{0} [ \alpha^{N-1}\pi_{N} c(\tilde{g}(\pi_{N})) \\
& + \alpha^N \pi_{N+1} c_N - \alpha^{N-1}\pi_{N} c_N ] \\ 
\leq \ &\eta,
\end{align*}
which implies~\eqref{eq:lem} with $\gamma = 0$ and thus completes the proof by Lemma~\ref{lem:pomdp}.
\end{proof}

\section{Numerical Example in Healthcare}
\label{sec:eg}
\subsection{Problem Setup}
The remainder of this paper discusses a particular numerical example of decisions on treatment and diagnosis in healthcare, displaying specifically the use of dual control in SMPC applied to a POMDP. Consider a patient treated for a specific disease which can be managed but not cured. For simplicity, we assume that the patient does not die under treatment. While this transition would have to be added in practice, it results in a time-varying model, which we avoid in order to keep the following discussion compact. 

The example, introduced in~\cite{sehr2017tractable}, is set up as follows. The disease encompasses three stages with severity increasing from Stage 1 through Stage 2 to Stage 3, transitions between which are governed by a Markov chain with transition probability matrix
\begin{align*}
P = \begin{bmatrix}
0.8&0.2&0.0\\0.0&0.9&0.1\\0.0&0.0&1.0
\end{bmatrix},
\end{align*}
where $P$ is the matrix with values $p_{ij}$ at row $i$ and column $j$. All transition and observation probability matrices below are defined similarly. Once our patient enters Stage 3, Stages 1 and 2 are inaccessible for all future times. However, Stage 3 can only be entered through Stage 2, a transition from which to Stage 1 is possible only under costly treatment. The same treatment inhibits transitions from Stage 2 to Stage 3. We have access to the patient state only through tests, which will result in one of three possible values, each of which is representative of one of the three disease stages. However, these tests are imperfect, with non-zero probability of returning an incorrect disease stage. All possible state transitions and observations are illustrated in Figure~\ref{fig:transitions}.

\begin{figure}[tb]
  \centering
  \includegraphics[width=\columnwidth]{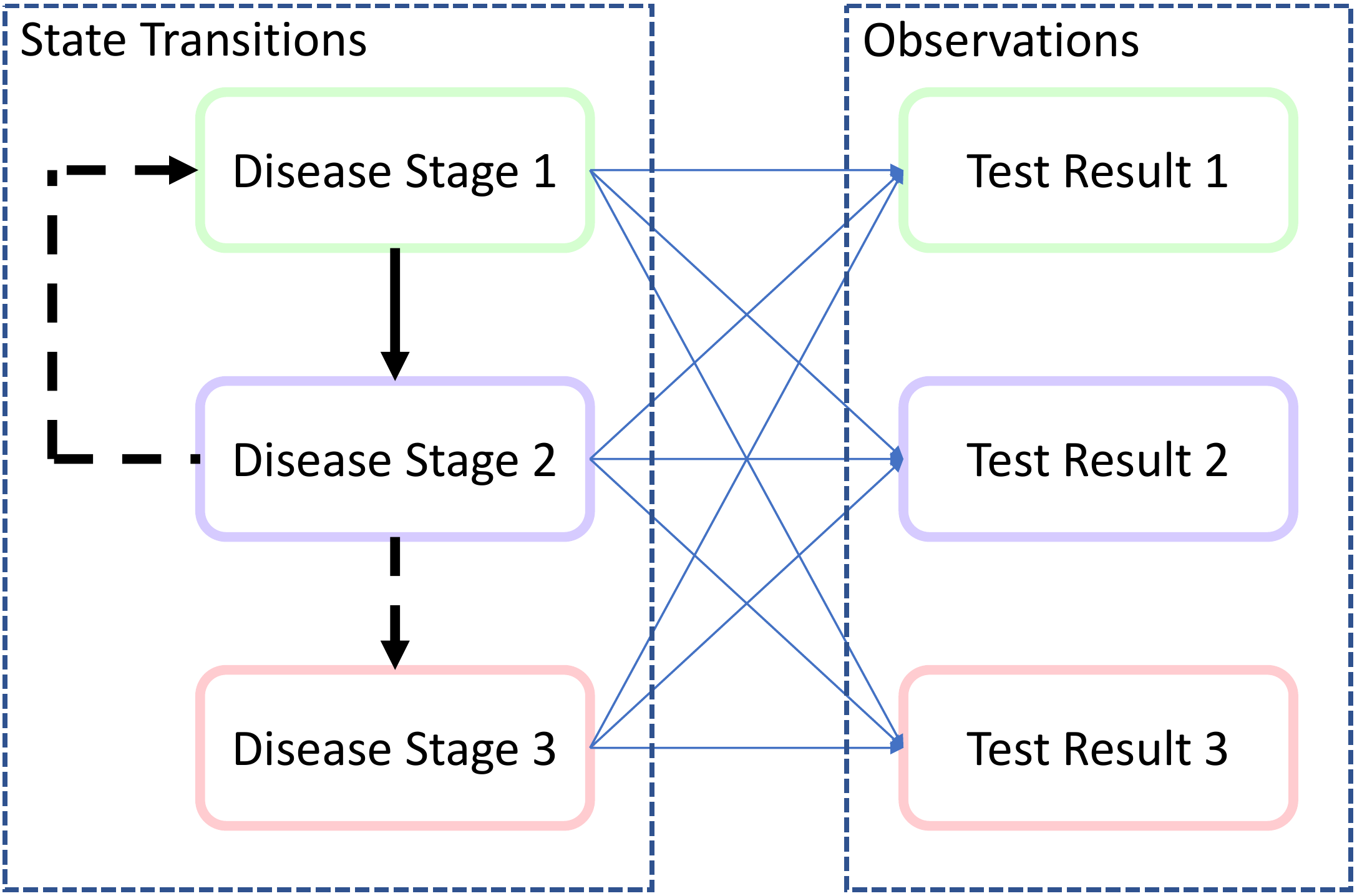}
  \caption{Feasible state transitions and possible test results in healthcare example. Solid arrows for feasible state transitions and observations. Dashed arrows for transitions conditional on treatment and diagnosis decisions.}
  \label{fig:transitions}
\end{figure}

At each point in time, the current information state $\pi_t$ is available to make one of four possible decisions:
\begin{enumerate}
\item Skip next appointment slot
\item Schedule new appointment
\item Order rapid diagnostic test
\item Apply available treatment
\end{enumerate}
Skipping an appointment slot results in the patient progressing through the Markov chain describing the transition probabilities of the disease without medical intervention, without new information being available after the current decision epoch. Scheduling an appointment does not alter the patient transition probabilities but provides a low-quality assessment of the current disease stage, which is used to refine the next information state. The third option, ordering a rapid diagnostic test, allows for a high-quality assessment of the patient's state, leading to a more reliable refinement of the next information state than possible when choosing the previous decision option. The results from this diagnostic test are considered available sufficiently fast so that the patient state remains unchanged under this decision. The remaining option entails medical intervention, allowing transition from Stage 2 to Stage 1 while preventing transition from Stage 2 to Stage 3. Transition probabilities $P(a)$, observation probabilities $R(a)$, and stage cost vectors $c(a)$ for each decision are summarized in Table~\ref{tab:numbers}. Additionally, we impose the terminal cost
\begin{align*}
c_N = \begin{bmatrix} 0 & 4 & 30 \end{bmatrix}^T.
\end{align*}

\begin{table*}[tb]
  \caption{Problem data for healthcare decision making example.}
  \centering
  \begin{tabular}{l|ccc}	
    Decision $a$ & Transition Probabilities $P(a)$ & Observation Probabilities $R(a)$ & Cost $c(a)$ \\
    \midrule
\hspace{0.2cm}1: Skip next appointment slot & 
$\begin{bmatrix} 0.80&0.20&0.00\\0.00&0.90&0.10\\0.00&0.00&1.00 \end{bmatrix}$ & 
$\begin{bmatrix} 1/3&1/3&1/3\\1/3&1/3&1/3\\1/3&1/3&1/3 \end{bmatrix}$ & 
$\begin{bmatrix} 0\\5\\5 \end{bmatrix}$ \\[0.5cm]
\hspace{0.2cm}2: Schedule new appointment & 
$\begin{bmatrix} 0.80&0.20&0.00\\0.00&0.90&0.10\\0.00&0.00&1.00 \end{bmatrix}$ & 
$\begin{bmatrix} 0.40&0.30&0.30\\0.30&0.40&0.30\\0.30&0.30&0.40 \end{bmatrix}$ & 
$\begin{bmatrix} 1\\1\\1 \end{bmatrix}$ \\[0.5cm]
\hspace{0.2cm}3: Order rapid diagnostic test & 
$\begin{bmatrix} 1.00&0.00&0.00\\0.00&1.00&0.00\\0.00&0.00&1.00 \end{bmatrix}$ & 
$\begin{bmatrix} 0.90&0.05&0.05\\0.05&0.90&0.05\\0.05&0.05&0.90 \end{bmatrix}$ & 
$\begin{bmatrix} 4\\3\\4 \end{bmatrix}$ \\[0.5cm]
\hspace{0.2cm}4: Apply available treatment & 
$\begin{bmatrix} 0.80&0.20&0.00\\0.75&0.25&0.00\\0.00&0.00&1.00 \end{bmatrix}$ & 
$\begin{bmatrix} 0.40&0.30&0.30\\0.30&0.40&0.30\\0.30&0.30&0.40 \end{bmatrix}$ & 
$\begin{bmatrix} 4\\2\\4 \end{bmatrix}$
  \end{tabular}
  \label{tab:numbers}
\end{table*}

\subsection{Rationale for Duality}
Intuitively, we expect an efficient policy for this problem to attempt avoiding transitions to Stage 3 while managing the resources required to schedule appointments, order tests, or apply medical intervention. This may, in principle, be achieved by a policy akin to the following structure:
\begin{enumerate}
\item Skip appointments when Stages 2 and 3 are unlikely.
\item Schedule appointments when Stages 2 and 3 are likely but the probability for Stage 2 is below some threshold.
\item Order diagnostic test if the probability of Stage 2 lies in a specific range.
\item Proceed with medical intervention if the probability of Stage 2 is high.
\end{enumerate}
While the optimal policy may be somewhat more intricate, this simple decision structure could be acceptable in practice. However, even this simple structure includes duality in the decisions, demonstrated by including the diagnostic test even though it does not alter the patient state. That is, this decision improves the quality of available information at a cost, also called \emph{exploration}. This improvement in the available information allows us to apply medical intervention at appropriate times, which is called \emph{exploitation}. 

\subsection{Computational Results}
The trade-off between these two principal decision categories is precisely what is encompassed by duality, which we can include in an optimal sense by solving~(\ref{eq:DP1pomdp}-\ref{eq:DP2pomdp}) and applying the resulting initial policy in receding horizon fashion. This is demonstrated in Figure~\ref{fig:simN5}, which shows simulation results for SMPC with control horizon $N = 5$ and discount factor $\alpha = 0.98$. As anticipated, the stochastic optimal receding horizon policy shows a structure not drastically different from the decision structure motivated above. In particular, diagnostic tests are used effectively to decide on medical intervention. 

In order to apply Theorem~\ref{thm:pomdp} to this particular example, we choose the policy $\tilde{g}(\cdot)$ in Assumption~\ref{assm:pomdp} always to apply medical intervention. Using the worst-case scenario for the expectations in~\eqref{eq:pomdpassm}, which entails transition from Stage 1 to Stage 2 under treatment, we can satisfy Assumption~\ref{assm:pomdp} with $\eta = 7.92$. The computed cost in our simulation is $J_{N}^{\star}(\pi_0) \approx 11.36$. Combined with the discount factor $\alpha = 0.98$, we thus have the upper bound
\begin{align*}
J_{\infty}(\pi_0,\mu_{\text{MPC}}^N) &\leq J_{N}^{\star}(\pi_0) + \frac{\alpha}{1-\alpha}\eta \approx
400
\end{align*}
via application of Theorem~\ref{thm:pomdp}. Denoting by $e_j$ the row-vector with entry $1$ in element $j$ and zeros elsewhere, the observed (finite-horizon) cost corresponding with Figure~\ref{fig:simN5} is
\begin{align*}
J_{\infty}^{\text{obs}} = \sum_{k = 0}^{29}e_{x_k}c(\mu_0^N(\pi_k)) \approx 38.53 < 400.
\end{align*}
While this bound is not particularly tight, one may modify the discount factor $\alpha$ or the terminal cost $c_N$ to achieve a tighter estimate of the achieved MPC cost.

\begin{figure*}[h]
  \centering
  \includegraphics[width=\linewidth]{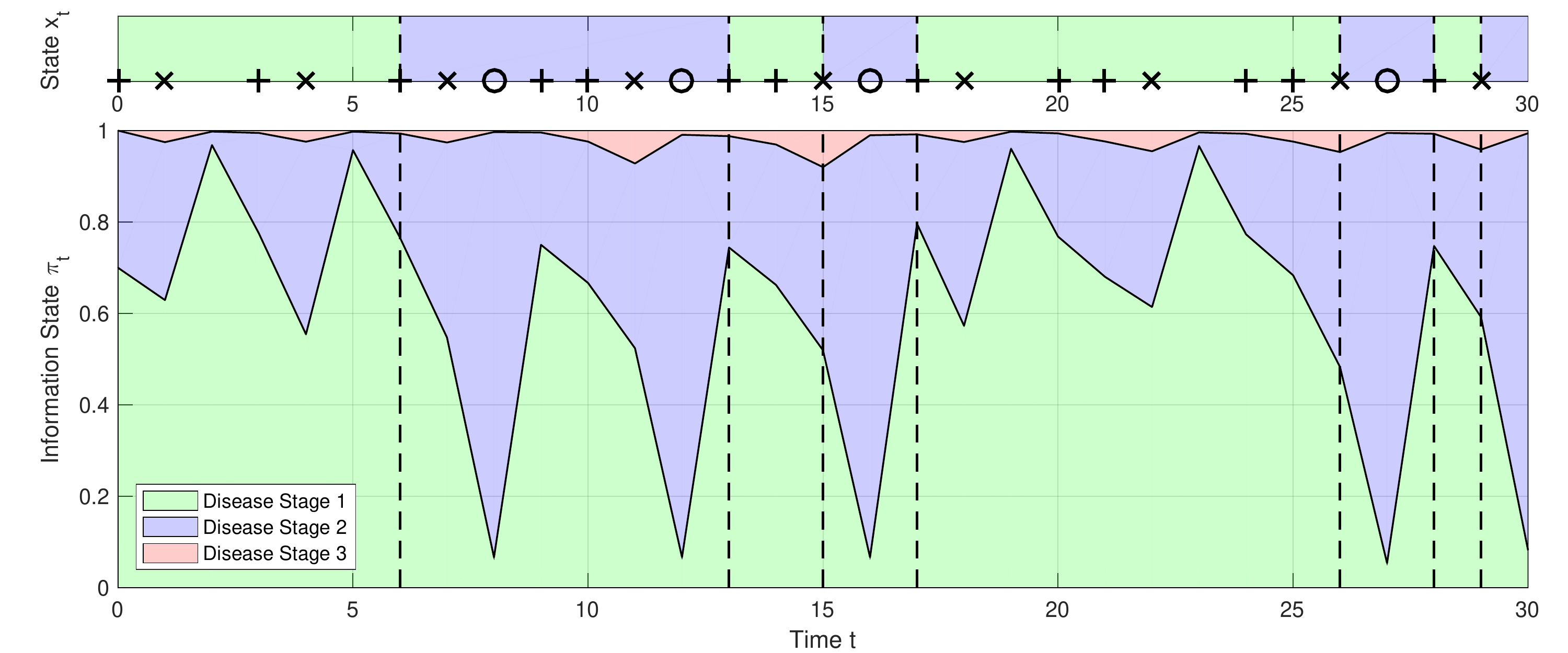}
  \caption{Simulation results for SMPC with horizon $N=5$ and discount factor $\alpha = 0.98$. Top plot displays patient state and transitions, with optimal SMPC decisions based on current information state: appointment (pluses); diagnosis (crosses); treatment (circles). Bottom plot shows information state evolution. Dashed vertical lines mark time instances of state transitions.}
  \label{fig:simN5}
\end{figure*}

\section{Conclusions}
\label{sec:conclusions}
We extended closed-loop achieved performance guarantees well-known in deterministic MPC to SMPC and in particular receding horizon control of POMDPs, which allow tractable solution of the underlying Stochastic Optimal Control problems and thus duality of the control inputs in an optimal sense. The basic formulations in this paper can be modified, for instance, by introducing state and input constraint sets or time-varying (monotonic) stage costs. While this requires additional assumptions to maintain recursive feasibility of the MPC and SMPC inputs, the cost discussion is rather similar. We demonstrated use of the novel results using a particular POMDP instance in healthcare decision making, demanding the use of probing control inputs in order to adequately decide upon the proper and cost-effective use of medical intervention.

\bibliographystyle{ieeetr}
\bibliography{CDC2017}

\begin{thebibliography}{10}

\bibitem{mayne2000constrained}
D.~Q. Mayne, J.~B. Rawlings, C.~V. Rao, and P.~O.~M. Scokaert, ``Constrained
  model predictive control: Stability and optimality,'' {\em Automatica},
  vol.~36, no.~6, pp.~789--814, 2000.

\bibitem{mayne2014model}
D.~Q. Mayne, ``Model predictive control: Recent developments and future
  promise,'' {\em Automatica}, vol.~50, no.~12, pp.~2967--2986, 2014.

\bibitem{grune2008infinite}
L.~Gr{\"u}ne and A.~Rantzer, ``On the infinite horizon performance of receding
  horizon controllers,'' {\em IEEE Transactions on Automatic Control}, vol.~53,
  no.~9, pp.~2100--2111, 2008.

\bibitem{sehr2016stochastic}
M.~A. Sehr and R.~R. Bitmead, ``Stochastic model predictive control:
  Output-feedback, duality and guaranteed performance.'' Submitted to
  Automatica, 2016.

\bibitem{sui2008robust}
D.~Sui, L.~Feng, and M.~Hovd, ``Robust output feedback model predictive control
  for linear systems via moving horizon estimation,'' in {\em American Control
  Conference}, (Seattle, WA), pp.~453--458, 2008.

\bibitem{mayne2009robust}
D.~Q. Mayne, S.~V. Rakovi{\'c}, R.~Findeisen, and F.~Allg{\"o}wer, ``Robust
  output feedback model predictive control of constrained linear systems: Time
  varying case,'' {\em Automatica}, vol.~45, no.~9, pp.~2082--2087, 2009.

\bibitem{blackmore2010probabilistic}
L.~Blackmore, M.~Ono, A.~Bektassov, and B.~C. Williams, ``A probabilistic
  particle-control approximation of chance-constrained stochastic predictive
  control,'' {\em IEEE Transactions on Robotics}, vol.~26, no.~3, pp.~502--517,
  2010.

\bibitem{sehr2017particle}
M.~A. Sehr and R.~R. Bitmead, ``Particle model predictive control: Tractable
  stochastic nonlinear output-feedback {MPC}.'' To appear in proc. IFAC World
  Congress, Toulouse, France, 2017.

\bibitem{smallwood1973optimal}
R.~D. Smallwood and E.~J. Sondik, ``The optimal control of partially observable
  markov processes over a finite horizon,'' {\em Operations Research}, vol.~21,
  no.~5, pp.~1071--1088, 1973.

\bibitem{kaelbling1998planning}
L.~P. Kaelbling, M.~L. Littman, and A.~R. Cassandra, ``Planning and acting in
  partially observable stochastic domains,'' {\em Artificial intelligence},
  vol.~101, no.~1, pp.~99--134, 1998.

\bibitem{sunberg2013information}
Z.~Sunberg, S.~Chakravorty, and R.~S. Erwin, ``Information space receding
  horizon control,'' {\em IEEE Transactions on Cybernetics}, vol.~43, no.~6,
  pp.~2255--2260, 2013.

\bibitem{sehr2017tractable}
M.~A. Sehr and R.~R. Bitmead, ``Tractable dual optimal stochastic model
  predictive control: An example in healthcare.'' Submitted for publication to
  Proc. 1st IEEE Conference on Control Technology and Applications, 2017.

\bibitem{bertsekas1995dynamic}
D.~P. Bertsekas, {\em Dynamic programming and optimal control}.
\newblock Belmont, MA: Athena Scientific, 1995.

\bibitem{BKKUM1986}
P.~R. Kumar and P.~Varaiya, {\em Stochastic Systems: Estimation,
  Identification, and Adaptive Control}.
\newblock Englewood Cliffs, NJ: Prentice-Hall, 1986.

\bibitem{bertsekas1978stochastic}
D.~P. Bertsekas and S.~E. Shreve, {\em Stochastic optimal control: The discrete
  time case}, vol.~23.
\newblock New York, NY: Academic Press, 1978.

\bibitem{puterman2014markov}
M.~L. Puterman, {\em {Markov} decision processes: discrete stochastic dynamic
  programming}.
\newblock John Wiley \& Sons, 2014.

\bibitem{elliott2008hidden}
R.~J. Elliott, L.~Aggoun, and J.~B. Moore, {\em Hidden {Markov} models:
  estimation and control}, vol.~29.
\newblock Springer Science \& Business Media, 2008.

\end{thebibliography}

\end{document}